\documentclass[12pt]{article}
\usepackage{MyStyleSheet}
\usepackage{hyperref}
\usepackage{tikz}
\usepackage{amsfonts}
\usepackage{braids}
\usepackage{blindtext}
\usepackage{geometry}
\usepackage{tikz}
\usepackage{caption}
\usepackage{theoremref}

\makeatletter
\let\@fnsymbol\@arabic
\makeatother
\title{Decidability of the Orbit Problem Over $\mathbb{Q}(X)$}
\author{Joshua Holden\thanks{Joshua Holden: holden@rose-hulman.edu; Department of Mathematics, Rose-Hulman Institute of Technology, 5500 Wabash Ave., Terre Haute, IN 47803, USA.}, Alexa Renner\footnote{Corresponding Author: Alexa Renner: renneram@rose-hulman.edu; Department of Mathematics, Rose-Hulman Institute of Technology, 5500 Wabash Ave., Terre Haute, IN 47803, USA.}}
\date{August 2026}

\newtheorem{counter}{counter}[section]

\theoremstyle{definition}
\newtheorem{definition}[counter]{Definition}
\newtheorem*{definition*}{Definition}

\theoremstyle{plain}
\newtheorem{theorem}[counter]{Theorem}
\newtheorem{lemma}[counter]{Lemma}
\newtheorem{corollary}[counter]{Corollary}

\theoremstyle{remark}

\begin{document}
\maketitle
\newcommand{\generation}{\text{generation}}
\newcommand{\Triv}{\text{Triv}}
\newcommand{\Frac}{\text{Frac}}
\newcommand{\CellWidth}{\text{CellWidth}}
\newcommand{\height}{\text{height}}
\newcommand{\pre}{\text{pre}}
\newcommand{\per}{\text{per}}
\newcommand{\post}{\text{post}}
\newcommand{\start}{\text{start}}
\newcommand{\send}{\text{end}}
\newcommand{\genTurn}{\text{gen}^\dag}
\newcommand{\ind}{\text{ind}}
\newcommand{\length}{\text{length}}
\newcommand{\ts}{\text{ts}}
\newcommand{\ct}{\text{ct}}
\newcommand{\width}{\text{width}}
\newcommand{\word}{\text{word}}
\newcommand{\cs}{\text{cs}}
\newcommand{\cross}{\text{cross}}
\newcommand{\In}{\text{In}}
\begin{abstract}
    The orbit problem is the problem of whether, given $x, y\in \mathbb{Q}^n$ and an $n\times n$ matrix $A$, there exists an $i\in\mathbb{N}$ such that $A^ix = y$. In 1980, Kannan and Lipton proved that the orbit problem is decidable. We show that a generalization of the orbit problem, where the field is $\mathbb{Q}(X)$ for $X$ a countable set of transcendentals, is also decidable. We define the orbit power problem for an arbitrary group $G$ to be the problem of when, given $x, y\in G$, there exists an $n\in\mathbb{Z}$ such that $x^n = y$. We then use the main result to show that the power orbit problem is decidable for an assortment of groups, including the braid groups and $\operatorname{Aut}(F_2)$.
\end{abstract}
\textbf{Keywords:} Orbit Problem, Decidability, Braid Groups

\hfill\break
\textbf{Mathematics Subject Classification:} Primary: 20F10, Secondary: 68Q17, 20F36, 20E36
\section*{Introduction}
The orbit problem, or the problem of whether, given $x, y\in \mathbb{Q}^n$ and an $n\times n$ matrix $A$, there exists an $i\in\mathbb{N}$ such that $A^ix = y$, was shown to be decidable in \cite{KL}. Since then, several generalizations of the orbit problem have emerged and have been solved. For example, in \cite{a}, the problem was generalized to the following:
\begin{center}
    Given $A$ a $d\times d$ matrix and semialgebraic sets $S, T\subseteq\mathbb{R}^d$, does there exist an $x\in S$ and $n\in\mathbb{N}$ such that $A^nx \in T$?
\end{center}
and was shown to be decidable for $d\leq 3$. The orbit problem has even been generalized to arbitrary groups, see \cite{enric}. We consider the generalization of the orbit problem to $\mathbb{Q}(X)$ for $X$ a countable set of indeterminants.

In section \ref{orbitproblem}, we prove that the orbit problem over $\mathbb{Q}(X)$ for $X$ a countable set of indeterminants is decidable, borrowing heavily from \cite{KL}. In section \ref{powerorbitproblem}, we apply this result to prove that in certain groups that admit certain linear representations, a generalization of the orbit problem is decidable. Finally, section \ref{outlook} gives a few different directions for future work.
\subsection*{Notation and Conventions}
We assume $0\in\mathbb{N}$. Almost all of the notation listed here is compatible with the notation in \cite{KL}.
\begin{itemize}
    \item Given fields $L/K$ and $\alpha\in L$ algebraic over $K$, we define the minimal polynomial of $\alpha$ to be the monic polynomial of minimal degree that has $\alpha$ as a root, and denote it by $f_\alpha$.
    \item We denote the degree of the minimal polynomial of $\alpha$ by $n_\alpha$. The fields in question will be made clear from context.
    \item Let $X$ be a set. We denote the set of functions from $X$ to $X$ by ${^X}\!X$.
    \item Let $n\in\mathbb{N}^+$. We denote the set $\{1,\cdots,n\}$ by $[n]$.
    \item We denote the set of $n\times n$ matrices over a ring $R$ by $M_{n\times n}(R)$.
    \item Given a matrix $A$, we denote its minimal polynomial by $f_A$.
    \item Let $X$ be a countable set of indeterminants and let $f(t_{\beta_1},\cdots,t_{\beta_m})(x)\in \mathbb{Q}(X)[x]$. We will write $f(x)$ in place of $f(t_{\beta_1},\cdots,t_{\beta_m})(x)$, and we will write $f|_{(d_1,\cdots,d_m)}(x)$ in place of $f(d_1,\cdots,d_m)(x)$ for $(d_1,\cdots,d_m)\in\mathbb{Q}^m$.
    \item Let $q(x)\in\mathbb{Q}[x]$, and let $q(x) = \frac{a_n}{b_n}x^n + \cdots + \frac{a_0}{b_0}$ where the coefficients are in lowest terms. We denote $\prod_{0\leq i\leq n} \left(|a_n||b_n|\right)$ by $||q||$\footnote{This definition differs slightly from the corresponding definition given in \cite{KL}, but this is necessary for the proof of \thref{21analog}.}.
\end{itemize}
\section{Orbit Problem}\label{orbitproblem}
Let $X$ be a countable set of indeterminants. By replacing $\mathbb{Q}$ with $\mathbb{Q}(X)$ in \cite[Section 1]{KL}, we get that the problem:
\begin{center}
    Given $A\in M_{n\times n}(\mathbb{Q}(X))$ and $x,y\in\mathbb{Q}(X)^n$, does there exist an $i\in\mathbb{N}$ such that $A^ix = y$?
\end{center}
is reducible to the problem:
\begin{center}
    Given $A,B\in M_{n\times n}(\mathbb{Q}(X))$, does there exist an $i\in\mathbb{N}$ such that $A^i = B$?
\end{center}
which is in turn reducible to the problem:
\begin{center}
    Given $A\in M_{n\times n}(\mathbb{Q}(X))$ and $q(x)\in \mathbb{Q}(X)[x]$, does there exist an $i\in\mathbb{N}$ such that $A^i = q(A)$?
\end{center}
which we will show is decidable by generalizing some of the arguments in \cite{KL}.

We first need to cite a folklore lemma:
\begin{lemma}\thlabel{rootsofunitylemma}
    Let $g(x)\in\mathbb{Z}[x]$ be such that $g(x) = x^n + a_{n-1}x^{n-1}+\cdots + a_0$. If all roots of $g(x)$ are roots of unity, then $|a_i|\leq\binom{n}{i}$ for $0\leq i\leq n-1$.
\end{lemma}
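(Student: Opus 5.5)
The plan is to write $g$ in terms of its roots and bound each coefficient using Vieta's formulas together with the triangle inequality. Since $g$ is monic of degree $n$ with complex roots $\zeta_1,\dots,\zeta_n$ (listed with multiplicity), we have
\[
g(x) = \prod_{j=1}^{n}(x-\zeta_j),
\]
and expanding the product gives
\[
a_i = (-1)^{n-i}\, e_{n-i}(\zeta_1,\dots,\zeta_n),
\]
where $e_k$ denotes the $k$-th elementary symmetric polynomial. By hypothesis every $\zeta_j$ is a root of unity, so $|\zeta_j| = 1$ for all $j$.

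The estimate itself is then immediate. The polynomial $e_{n-i}(\zeta_1,\dots,\zeta_n)$ is a sum of $\binom{n}{n-i} = \binom{n}{i}$ monomials, and each monomial is a product of $n-i$ of the $\zeta_j$, so it has absolute value exactly $1$. The triangle inequality therefore gives
\[
|a_i| = |e_{n-i}(\zeta_1,\dots,\zeta_n)| \le \binom{n}{i}, \qquad 0 \le i \le n-1.
\]

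No step here is a real obstacle. The one point to handle carefully is repeated roots: $g$ need not be squarefree, so the $\zeta_j$ must be counted with multiplicity, and the factorization over $\mathbb{C}$ by the fundamental theorem of algebra does exactly that. Integrality of the coefficients is not used for the bound itself. It matters only later, where the lemma is applied to conclude that there are finitely many such $g$ of each degree.
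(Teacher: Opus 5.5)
Your proof is correct. Expanding $g(x)=\prod_{j=1}^{n}(x-\zeta_j)$ over $\mathbb{C}$, with roots counted with multiplicity, gives $a_i=(-1)^{n-i}e_{n-i}(\zeta_1,\dots,\zeta_n)$, a sum of $\binom{n}{i}$ terms of modulus $1$, and the triangle inequality finishes it. This is the standard argument for the lemma; the paper gives no proof of its own and simply cites the lemma as folklore.

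Your closing aside about where integrality matters does not describe how this paper uses the lemma. It is used in contrapositive form in Case 2 of the proof of the $\mathbb{Q}(t_1,\cdots,t_m)$ analogue of Kannan--Lipton's Theorem 2.1, to produce a root that is not a root of unity. That remark has no bearing on the proof itself.
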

The following generalizes \cite[Theorem 2.1]{KL} from $\mathbb{Q}$ to $\mathbb{Q}(t_1,\cdots, t_m)$, for $t_1,\cdots,t_m$ indeterminants:
\begin{lemma}\thlabel{21analog} 
    Fix $m\in\mathbb{N}$ and let $t_1,\cdots, t_m$ be indeterminants. There exists an exactly computable function $p(\cdot,\cdot)$ such that for any number $\alpha\in\overline{\mathbb{Q}(t_1,\cdots, t_m)}$ and $q(x)\in \mathbb{Q}(t_1,\cdots,t_m)[x]$, if $\alpha$ is not a root of unity, then $\alpha^i = q(\alpha)$ implies $i\leq p(f_\alpha,q)$.
\end{lemma}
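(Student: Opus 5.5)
The plan is to reduce to the rational case, \cite[Theorem 2.1]{KL}, by specializing the indeterminates $t_1,\cdots,t_m$ to a suitably chosen rational point $d=(d_1,\cdots,d_m)\in\mathbb{Q}^m$. Write $K=\mathbb{Q}(t_1,\cdots,t_m)$, let $f=f_\alpha\in K[x]$ be monic of degree $n=n_\alpha$, and observe that $\alpha^i=q(\alpha)$ is equivalent to $f_\alpha \mid x^i-q(x)$ in $K[x]$. Write $x^i-q(x)=f(x)h(x)$ with $h\in K[x]$. Since $f$ is monic, polynomial division by $f$ introduces no new denominators. Hence $h$ has coefficients in the localization of $\mathbb{Q}[t_1,\cdots,t_m]$ at the denominators of $f$ and $q$. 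So for any $d$ at which none of these denominators vanish, we may specialize and obtain $f|_d \mid x^i-q|_d(x)$ in $\mathbb{Q}[x]$. Consequently every root $\beta$ of $f|_d$ satisfies $\beta^i=q|_d(\beta)$. If some such $\beta$ is not a root of unity, then \cite[Theorem 2.1]{KL}, applied to $\beta$ with minimal polynomial $g$ equal to an irreducible factor of $f|_d$, gives $i\leq p_{KL}(g,q|_d)$. This bound depends only on $f$, $q$ and $d$.

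Everything therefore hinges on choosing $d$ effectively, as a function of $(f,q)$ alone, so that $f|_d$ has a root that is not a root of unity. I will split into two cases.

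\emph{Case 1: all coefficients of $f$ lie in $\mathbb{Q}$.} Then $\alpha$ is algebraic over $\mathbb{Q}$ and is not a root of unity. Specializing only $q$ at any admissible $d$ (say the first admissible one in a fixed enumeration of $\mathbb{Q}^m$) puts us directly in the situation above with $\beta=\alpha$.

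\emph{Case 2: some coefficient $c(t)$ of $f$ is a nonconstant rational function.} Suppose every root of $f|_d$ were a root of unity. Then $f|_d$ would be a monic product of cyclotomic polynomials, hence in $\mathbb{Z}[x]$. By \thref{rootsofunitylemma}, the corresponding coefficient of $f|_d$ would then be an integer of absolute value at most $\binom{n}{j}$, where $j$ is the relevant degree. Since $c$ is nonconstant, it takes infinitely many values on $\mathbb{Q}^m$ off the zero set of the denominators. We therefore enumerate $\mathbb{Q}^m$ in a fixed computable order and take the first $d$ satisfying two conditions: all denominators of $f$ and $q$ are nonzero at $d$, and $c(d)$ is not an integer in $[-\binom{n}{j},\binom{n}{j}]$. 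Such a $d$ exists, and the search terminates.

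We then define $p(f,q)$ as the maximum of $p_{KL}(g,q|_d)$ over the irreducible factors $g$ of $f|_d$ that are not cyclotomic. Factorization over $\mathbb{Q}$ and the cyclotomicity test are both computable, so $p$ is exactly computable. The bound applies because $\alpha$ itself is no longer what we test: some root of $f|_d$ is not a root of unity, which is all the argument of the first paragraph needs.

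I expect the main obstacle to be the interaction with the exact form of \cite[Theorem 2.1]{KL}, that is, making sure its bound is genuinely a computable function of $g$ and of the specialized $q|_d$. This is presumably where the modified norm $||q||$ enters. Beyond that, care is needed to justify that the specialization commutes with the divisibility. My plan for that step is to reduce $x^i-q$ modulo the monic $f$ over the localized polynomial ring, which avoids any issue with $h$ having unexpected denominators.
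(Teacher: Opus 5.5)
Your proposal is correct. It uses the same core mechanism as the paper: specialize $t\mapsto d$, transfer $\alpha^i=q(\alpha)$ to the roots of $f_\alpha|_d$ (the paper's Claim 1), and invoke Kannan--Lipton's Theorem 2.1. The difference is the case split.

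The paper separates three cases:
\begin{enumerate}
\item $\alpha$ not integral over $\mathbb{Z}[t_1,\dots,t_m]$: it chooses $d$ with some denominator $s_i(d)\neq 1$, aiming for a non-integral specialized root.
\item $\alpha$ integral but not in $\overline{\mathbb{Q}}$: it chooses $d$ so that a numerator exceeds the binomial bound, then redoes the Blanksby--Montgomery estimate by hand to get the explicit $p_2$.
\item $\alpha\in\overline{\mathbb{Q}}$: it applies the rational theorem directly.
\end{enumerate}
You merge the first two. Making one nonconstant coefficient avoid the finite set $\mathbb{Z}\cap[-\binom{n}{j},\binom{n}{j}]$ guarantees a specialized root that is not a root of unity, integral or not, and the rational theorem handles either kind.

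This makes your argument more robust than the paper's in three places:
\begin{itemize}
\item The paper's Case 1 rule does not actually force a non-integral root, or even one that is not a root of unity. For example, $f_\alpha=x-\frac{2t_1}{t_1+1}$ has $s(1)=2$, yet it specializes at $t_1=1$ to $x-1$.
\item Your monic division over the localization justifies specializing $x^i-q=f_\alpha h$. The paper's Claim 1 does this without checking that the denominators of $q$ and of the cofactor are nonzero at $d$.
\item In the constant-coefficient case you specialize $q$ before applying the rational result. The paper applies that result to a $q$ that may still involve the $t_j$.
\end{itemize}

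In exchange, the paper's route yields an explicit closed-form bound in the integral case, where yours makes a black-box call. Both arguments ultimately depend on Kannan--Lipton's bound being genuinely computable, with the corrected norm the paper's footnote refers to.
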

\begin{proof}
    Let $K = \operatorname{Frac}(\mathbb{Z}[t_1,\cdots,t_m]) = \mathbb{Q}(t_1,\cdots, t_m)$. We consider three cases for $\alpha$: The case where $\alpha$ is not an algebraic integer over $K$, the case where $\alpha$ is an algebraic integer over $K$ but is not an element of $\overline{\mathbb{Q}}$, and the case where $\alpha\in\overline{\mathbb{Q}}$. In each case, we prove that there exists a bound on the $i$ for which $\alpha^i = q(\alpha)$, and this bound can be computed using only $f_\alpha$ and $q$.

    Let $f_\alpha(x) = x^{n_\alpha} + \frac{r_{n_\alpha-1}(t_1,\cdots,t_m)}{s_{n_\alpha-1}(t_1,\cdots,t_m)}x^{n-1} + \cdots +\frac{r_0(t_1,\cdots,t_m)}{s_0(t_1,\cdots,t_m)}$ be the minimal polynomial of $\alpha$ for $r_i(t_1,\cdots,t_m),s_i(t_1,\cdots,t_m)\in\mathbb{Z}[t_1,\cdots,t_m]$ polynomials. We may assume that $\gcd(s_i(t_1,\cdots,t_m),r_i(t_1,\cdots,t_m)) = 1$ for $0\leq i\leq n-1$. We first prove a claim:

    \hfill\break
    \textbf{Claim 1:} Let $(d_1,\cdots,d_m)\in\mathbb{Z}^m$, $q(x)\in K[x]$, and let $\Tilde{\alpha}$ be a root of $q|_{(d_1,\cdots, d_m)}$. For any $k\in\mathbb{N}$, if $\alpha^k = q(\alpha),$ then $ \Tilde{\alpha}^k = q|_{(d_1,\cdots,d_m)}(\Tilde{\alpha})$.

    \hfill\break
    \textbf{Proof:} Suppose $\alpha^k = q(\alpha)$. Then $\alpha$ is a root of $x^k - q(x)$, so there exists $w(x)\in K[x]$ with $x^k - q(x) = f_\alpha(x)w(x)$. Then, $x^k - q|_{(d_1,\cdots,d_m)}(x) = f_\alpha|_{(d_1,\cdots,d_m)}(x)w|_{(d_1,\cdots,d_m)}(x)$. Because $f_\alpha|_{(d_1,\cdots,d_m)}(\Tilde{\alpha})=0,$ we have that $\Tilde{\alpha}^k-q|_{(d_1,\cdots,d_m)}(\Tilde{\alpha}) = 0$, so $\Tilde{\alpha}^k = q|_{(d_1,\cdots,d_m)}(\Tilde{\alpha})$.
\begin{flushright}
$\dashv_{\text{Claim}}$
\end{flushright}

    \hfill\break
    \textbf{Case 1:} $\alpha$ is not an algebraic integer over $K$.

    By definition of algebraic integer, the $s_i$ are not identically 1. Then, by the fundamental theorem of algebra, there exist $d_1,\cdots,d_m\in\mathbb{Z}$ such that for all $i, s_i(d_1,\cdots,d_m)\neq 0$ and there exists an $i$ such that $s_i(d_1,\cdots,d_m)\neq1$. Notice that we can find $(d_1,\cdots,d_m)$ in a finite time as $\mathbb{Z}^m$ is enumerable. Let $\Tilde{\alpha}$ be any root of $f_\alpha|_{(d_1,\cdots,d_m)}(x)$ which is not an algebraic integer over $\mathbb{Q}$, which exists by the fact that $f_\alpha|_{(d_1,\cdots,d_m)}$ is a product of the monic minimal polynomials of its roots.

    Let $p_1(f_\alpha, r) = n_\alpha2\log_2||f_\alpha|_{(d_1,\cdots,d_m)}||$\footnote{\cite{KL} defines $||\cdot||$ differently and cites \cite{marcus} for the result. However, this result fails for the norm given in \cite{KL}.}. By claim 1 and the proof of \cite[Theorem 2.1]{KL}, $i\leq  p_1(f_\alpha,r)$.

\hfill\break
\textbf{Case 2:} $\alpha$ is an algebraic integer over $K, \alpha\not\in\overline{\mathbb{Q}}$. 

Because there exists at least one $r_j$ that is nonconstant, and because $r_j(t_1,\cdots,t_m)\in\mathbb{Z}[t_1,\cdots,t_m]$, there exists a $(d_1,\cdots,d_m)\in\mathbb{Z}^m$ such that $r_j(d_1,\cdots,d_m)>\binom{n}{i}$. By \thref{rootsofunitylemma}, there exists a root $\Tilde{\alpha}$ of $f_\alpha|_{(d_1,\cdots,d_m)}(x)$ which is not a root of unity. Then, if $\alpha^i = q(\alpha)$, we must have $\Tilde{\alpha}^i = q|_{(d_1,\cdots,d_m)}(\Tilde{\alpha})$ by claim 1. By \cite{bbm}, there exists a conjugate $\theta$ of $\Tilde{\alpha}$ such that $|\theta|>1 + \frac{1}{(30n_\theta^2\log_2(6n_\theta))}\geq 1 + \frac{1}{(30n_\alpha^2\log_2(6n_\alpha))}$ as $n_\theta\leq n_\alpha$ because $f_\theta$ divides $f_\alpha|_{(d_1,\cdots,d_m)}$. Then, $i\leq \frac{\log|q|_{(d_1,\cdots,d_m)}(\theta)|}{\log|\theta|}$. Because $\theta$ is a conjugate of $\Tilde{\alpha},$ we have that $ \Tilde{\alpha}^i = q|_{(d_1,\cdots,d_m)}(\Tilde{\alpha})$ implies that $\theta^i = q|_{(d_1,\cdots,d_m)}(\theta)$. By the fact that $|\theta| > 1$:
\begin{gather*}
    \log|q(\theta)|\leq\log[(n_\theta+1)|\theta|^{n_\theta}||q|_{(d_1,\cdots,d_m)}||] = \log(n_\theta + 1) + n_\theta\log|\theta| +\log||q|_{(d_1,\cdots,d_m)}||\\
    \leq \log(n_\alpha + 1) + n_\alpha\log|\theta| +\log||q|_{(d_1,\cdots,d_m)}||.
\end{gather*}
Define $p_2(f_\alpha,q) = (30n_\alpha^2\ln(6n_\alpha))(\log(n_\alpha+1) + \log||q|_{(d_1,\cdots,d_m)}||)+ n_\alpha$. Then, 
\begin{align*}
    i\leq\frac{\log(n_\alpha+1)+\log||q|_{(d_1,\cdots,d_m)}||}{\log|\theta|}+n_\alpha\leq (30n_\alpha^2\ln(6n_\alpha))(\log(n_\alpha+1) + \log||q|_{(d_1,\cdots,d_m)}||)+ n_\alpha\\
    = p_2(f_\alpha, q).
\end{align*}

\hfill\break
\textbf{Case 3:} $\alpha$ is an algebraic integer over $K$ in $\overline{\mathbb{Q}}$.

By \cite[Theorem 2.1]{KL}, there exists a polynomial $p_3(f_\alpha,q)$ such that $i\leq p_3(f_\alpha,q)$.

Set $p(f_\alpha, q) = \max\{\lceil p_1(f_\alpha,q)\rceil,\lceil p_2(f_\alpha,q)\rceil,\lceil p_3(f_\alpha,q)\rceil\}$. Because $p_1,p_2,p_3$ are each computable, $p(f_\alpha, q)$ is computable. By cases 1-3, if $\alpha$ is not a root  of unity and $q(x)\in K[x], $ then $\alpha^i = q(\alpha)$ implies that $i\leq p(f_\alpha, q)$.    
\end{proof}
\begin{lemma}\thlabel{big21analog}
    Let $X$ be a countable set of indeterminants. Let $\alpha$ be algebraic over $\operatorname{Frac}(\mathbb{Z}[X])$. There exists a computable function $p(f_\alpha,q)$ such that if there exists $i$ with $\alpha^i = q(\alpha)$, then $i\leq p(f_\alpha,q)$ if $\alpha$ is not a root of unity.
\end{lemma}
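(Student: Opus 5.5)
The plan is to reduce to \thref{21analog} by noting that only finitely many indeterminants of $X$ ever appear in the data $(f_\alpha, q)$. Since $\alpha$ is algebraic over $\operatorname{Frac}(\mathbb{Z}[X]) = \mathbb{Q}(X)$, its minimal polynomial $f_\alpha$ has finitely many coefficients, each a quotient of two integer polynomials. Each such quotient involves only finitely many indeterminants. The same holds for the finitely many coefficients of $q(x)$. So, given $f_\alpha$ and $q$ as finite objects, we can effectively read off a finite set $\{t_{\beta_1},\cdots,t_{\beta_m}\}\subseteq X$ such that $f_\alpha, q \in K_m[x]$, where $K_m = \mathbb{Q}(t_{\beta_1},\cdots,t_{\beta_m})$. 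This is the notation already used in the conventions, and $m$ is computable from the input.

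Next I check that the hypotheses of \thref{21analog} are met over $K_m$. First, $\alpha$ is algebraic over $K_m$, since $f_\alpha\in K_m[x]$. Second, the minimal polynomial of $\alpha$ over $K_m$ coincides with $f_\alpha$, the minimal polynomial over $\mathbb{Q}(X)$. Indeed, the minimal polynomial over $K_m$ divides $f_\alpha$, and $f_\alpha$ divides it because $K_m\subseteq \mathbb{Q}(X)$. So the bound of \thref{21analog}, which depends only on $f_\alpha$ and $q$, applies verbatim. Also, $\alpha\in\overline{\mathbb{Q}(X)}$ lies in $\overline{K_m}$ after identifying algebraic closures: $\alpha$ is a root of $f_\alpha\in K_m[x]$, so it lies in the algebraic closure of $K_m$ inside $\overline{\mathbb{Q}(X)}$. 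Finally, being a root of unity is intrinsic to $\alpha$, so that hypothesis transfers unchanged.

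I then define $p(f_\alpha,q) := p_m(f_\alpha,q)$, where $p_m$ is the computable function from \thref{21analog} for this $m$ and these indeterminants. For computability, I will observe that the construction in the proof of \thref{21analog} is uniform in $m$. The search for $(d_1,\cdots,d_m)\in\mathbb{Z}^m$, the evaluation of norms, and the bound from \cite[Theorem 2.1]{KL} are all effective given $m$. Hence the map $(f_\alpha,q)\mapsto p_m(f_\alpha,q)$ is a single computable function. Then $\alpha^i = q(\alpha)$ with $\alpha$ not a root of unity gives $i\le p(f_\alpha,q)$.

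The main obstacle is this uniformity claim. \thref{21analog} is stated as ``fix $m$, there exists a computable function,'' and a family of computable functions need not be uniformly computable. I would address it by inspecting the three cases of that proof and noting that each step is an explicit algorithm taking $m$ as a parameter. Deciding which case applies also needs to be effective, in particular whether $\alpha\in\overline{\mathbb{Q}}$ and whether $\alpha$ is integral over $K_m$. Both can be read from whether the coefficients of $f_\alpha$ are constant, respectively polynomial. Alternatively, one can sidestep the case decision by taking the maximum over all applicable bounds.
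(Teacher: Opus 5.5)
Your proposal is correct and follows the paper's route. The paper's proof only observes that one can computably extract the finitely many indeterminants $t_1,\cdots,t_m$ occurring in $f_\alpha$ and $q$, and then invokes the fixed-$m$ lemma (it cites \thref{rootsofunitylemma}, evidently meaning \thref{21analog}); your extra checks — that $f_\alpha$ is still the minimal polynomial over $\mathbb{Q}(t_{\beta_1},\cdots,t_{\beta_m})$, and that the bound of \thref{21analog} is uniform in $m$ — are details the paper leaves implicit.
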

\begin{proof}
    It is a computable process to determine the finite set of indeterminants $\{t_1,\cdots, t_m\}$ which appear in $f_\alpha(x)$ or $q(x)$. The lemma follows from \thref{rootsofunitylemma}.
\end{proof}
We now restate \cite[Theorem 3.1]{KL}:
\begin{lemma}\thlabel{31analog}
    Let $\mathbb{F}$ be any field and let $X$ be a countable set of indeterminants. Suppose $A\in\mathbb{F}^{n\times n}$ has minimal polynomial $p(x)$ belonging to $\mathbb{F}[x]$ and let $r(x),q(x)\in\mathbb{F}[x]$. Then:
    \begin{enumerate}
        \item $r(A) = q(A)$ if and only if $r(x) = q(x)\mod p(x)$.
    \end{enumerate}
    Furthermore, if $F = \mathbb{Q}(X)$ and $p(x)$ is irreducible over $\mathbb{Q}(X)$ and has $\alpha$ as a root, then (1) is equivalent to $r(\alpha) = q(\alpha)$.
\end{lemma}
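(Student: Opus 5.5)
Part (1) is pure linear algebra over an arbitrary field $\mathbb{F}$, and I plan to argue it directly from the defining property of the minimal polynomial. Set $h(x) = r(x) - q(x)\in\mathbb{F}[x]$, so that $r(A) = q(A)$ if and only if $h(A) = 0$. The set $I = \{g\in\mathbb{F}[x] : g(A) = 0\}$ is an ideal of the principal ideal domain $\mathbb{F}[x]$, since evaluation at $A$ is a ring homomorphism $\mathbb{F}[x]\to M_{n\times n}(\mathbb{F})$. By definition, $p(x)$ is the monic generator of $I$. Hence $h(A) = 0$ exactly when $p(x)\mid h(x)$, which is the statement $r(x)\equiv q(x)\bmod p(x)$. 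For completeness I will also spell out the division-algorithm version. Write $h = sp + \rho$ with $\deg\rho<\deg p$. Then $h(A) = \rho(A)$, and $\rho(A) = 0$ forces $\rho = 0$ by minimality of $\deg p$ (otherwise we could normalize $\rho$ to get a smaller monic annihilating polynomial).

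For the second assertion, take $\mathbb{F} = \mathbb{Q}(X)$ with $p(x)$ irreducible over $\mathbb{Q}(X)$ and $p(\alpha) = 0$ for some $\alpha\in\overline{\mathbb{Q}(X)}$. One direction is immediate. If $p\mid h$, then $h(\alpha) = s(\alpha)p(\alpha) = 0$, so $r(\alpha) = q(\alpha)$. For the converse, suppose $h(\alpha) = 0$. Because $p$ is monic and irreducible with $\alpha$ as a root, it is (up to the normalization already imposed) the minimal polynomial $f_\alpha$ of $\alpha$ over $\mathbb{Q}(X)$. The ideal $\{g\in\mathbb{Q}(X)[x] : g(\alpha)=0\}$ is therefore generated by $p$, so $p\mid h$. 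Equivalently, $\gcd(p,h)$ is a nonconstant divisor of the irreducible $p$ (nonconstant because $\alpha$ is a common root), hence equals $p$ up to a unit.

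The only point needing care, rather than a genuine obstacle, is the hypothesis that $\alpha$ is algebraic over $\mathbb{Q}(X)$ with $p$ being its minimal polynomial. This holds because $p$ is monic and irreducible, and the minimal polynomial of a matrix is monic by definition. Nothing specific to $\mathbb{Q}(X)$ is used beyond its being a field, so the argument is just Kannan--Lipton's proof of \cite[Theorem 3.1]{KL} with $\mathbb{Q}$ replaced by $\mathbb{Q}(X)$. I will remark that this is precisely what lets us later reduce $A^i = q(A)$, for $A$ with irreducible minimal polynomial, to $\alpha^i = q(\alpha)$, where \thref{big21analog} applies.
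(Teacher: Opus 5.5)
Your proof is correct and takes essentially the same route as the paper. The paper only says that the proof of Kannan--Lipton's Theorem 3.1 carries over verbatim with $\mathbb{Q}$ replaced by $\mathbb{Q}(X)$, and you write that standard argument out explicitly: the annihilator ideal of $A$ is generated by the monic $p$, and an irreducible monic $p$ with root $\alpha$ equals $f_\alpha$, which generates the polynomials vanishing at $\alpha$.
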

Notice that the proof is identical to that given in \cite{KL}, with $\mathbb{Q}$ replaced with $\mathbb{Q}(X)$.
\begin{lemma}\thlabel{reducetocount}
   Let $X$ be a countable set of indeterminants and let $p(x)\in\mathbb{Q}(X)[x]$. Then, if $p(x)\in\mathbb{Q}(t_1,\cdots,t_n)[x]$ for some $t_1,\cdots,t_n\in X$, then $p(x)$ is reducible in $\mathbb{Q}(X)[x]$ if and only if it is reducible in $\mathbb{Q}(t_1,\cdots,t_n)[x]$.
\end{lemma}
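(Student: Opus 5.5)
The plan is to prove the two directions separately. Write $K=\mathbb{Q}(t_1,\cdots,t_n)$ and $Y = X\setminus\{t_1,\cdots,t_n\}$, so that $\mathbb{Q}(X) = K(Y)$ and $Y$ is a set of indeterminates that are algebraically independent over $K$. Here ``reducible'' means factoring into two factors of positive degree in $x$.

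\textbf{The easy direction.} If $p = gh$ with $g,h\in K[x]$ of positive degree, the same factorization exhibits $p$ as reducible in $\mathbb{Q}(X)[x]$, since $K[x]\subseteq\mathbb{Q}(X)[x]$.

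\textbf{The converse.} Suppose $p = gh$ with $g,h\in\mathbb{Q}(X)[x]$ of positive degree. I will show that $p$ then also factors nontrivially over $K$.

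First, only finitely many elements of $X$ occur in the coefficients of $g$ and $h$. So we may replace $Y$ by a finite set $\{y_1,\cdots,y_k\}$ and work in $L = K(y_1,\cdots,y_k)$.

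Next, normalize. We may assume $p$ is monic, after dividing by its leading coefficient, which lies in $K$. We may then take $g,h$ monic as well.

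The key point is specialization. The coefficients of $g,h$ are rational functions in $y_1,\cdots,y_k$ over $K$. Their denominators are finitely many nonzero polynomials in $K[y_1,\cdots,y_k]$. Since $K$ is infinite, there is a point $(c_1,\cdots,c_k)\in\mathbb{Q}^k$ at which none of these denominators vanishes. This is the same kind of argument used in the proof of \thref{21analog}, where one picks integer points avoiding the zeros of the $s_i$.

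Substitute $y_j\mapsto c_j$. This is a ring homomorphism from the localization of $K[y_1,\cdots,y_k]$ at these denominators to $K$. It fixes $p$, because $p$ involves no $y_j$. It sends $g,h$ to monic polynomials $\bar g,\bar h\in K[x]$ with the same positive degrees, and $p = \bar g\bar h$. Hence $p$ is reducible in $K[x]$.

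\textbf{Alternative route.} Another option is Gauss's lemma, viewing $K[y_1,\cdots,y_k]$ as a UFD with fraction field $L$: a monic factorization over $L$ of a polynomial whose coefficients lie in $K$ can be taken with coefficients in $K[y]$. Then setting every $y_j=0$ gives the factorization over $K$. Either route works; the specialization argument is more elementary.

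\textbf{Main obstacle.} The only delicate step is keeping degrees intact under specialization. Normalizing to monic factors handles this: leading coefficients equal to $1$ cannot vanish, so neither factor can collapse to a constant. The remaining requirement, finding a point where the denominators are nonzero, follows from the fact that a nonzero polynomial over an infinite field cannot vanish on all of $\mathbb{Q}^k$.
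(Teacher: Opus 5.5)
Your proof is correct, and it is more complete than the paper's, whose entire proof is the sentence ``The proof follows from the definitions.'' That sentence covers only the direction you call easy, $K[x]\subseteq\mathbb{Q}(X)[x]$. The converse is a genuine fact about the purely transcendental extension $K(Y)/K$, and it fails for algebraic extensions: $x^2+1$ is irreducible over $\mathbb{Q}$ but reducible over $\mathbb{Q}(i)$. This converse is also exactly the direction the paper relies on when it factors $f_A$ over $\mathbb{Q}(t_1,\dots,t_m)$ and treats the factors as irreducible over $\mathbb{Q}(X)$.

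Your specialization argument supplies the missing step. After normalizing to monic factors, you evaluate the finitely many $y_j$ at a point of $\mathbb{Q}^k$ that avoids the denominators. This is a $K$-algebra homomorphism on the relevant localization, so it fixes $p$ and keeps the degrees of the factors.

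Your Gauss's-lemma alternative is equally valid. So is the standard route via algebraic closedness: the coefficients of a monic factor of $p$ are symmetric functions in roots of $p$, hence algebraic over $K$, and $K$ is algebraically closed in $K(Y)$. Your version has the advantage of being elementary and effective: a factorization over $\mathbb{Q}(X)$ is turned into one over $K$ by an explicit search and substitution, which suits the algorithmic context. It also mirrors the specialization trick in Claim~1 of the paper's Lemma on $\alpha^i=q(\alpha)$.

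One small wording fix: the point $(c_1,\dots,c_k)$ exists because $\mathbb{Q}$, the set you draw coordinates from, is infinite, not merely because $K$ is. Your closing paragraph already says this correctly.
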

\begin{proof}
    The proof follows from the definitions.
\end{proof}
Notice that the minimal polynomial algorithm and the roots of unity algorithms given in \cite{KL} can be used to find the minimum polynomial of a matrix over $K$ or to determine if all roots of a polynomial in $\mathbb{Q}(X)[x]$ are roots of unity, respectively. Using the roots of unity algorithm given in \cite[Section 3, Page 258]{KL} and \cite[Lemma 3.2]{KL}, we can determine if all roots of $f_A$ are roots of unity.

\hfill\break
\textbf{Case 1:} All roots of $f_A$ are roots of unity.

Notice that $f_A\in\mathbb{Q}[x]$ and $f_A$ is monic. From the proofs of cases 2 and 3 of \cite{KL}, \thref{31analog}, and \thref{reducetocount}, it follows that the algorithms described in cases 2 and 3 of \cite[Section 3]{KL} determine whether there exists an $i\in\mathbb{N}$ such that $A^i = q(A)$.

\hfill\break
\textbf{Case 2:} There is a root of $f_A$ that is not a root of unity.

Notice that $A^i=q(A)$ implies $x^i\equiv q(x)\mod f_A(x)$, which in turn implies that $x^i \equiv q(x)\mod f_\alpha(x)$ as $f_\alpha(x)\mid f_A(x)$. It is an exactly computable process to enumerate the indeterminants $t_1,\cdots, t_m$ that appear in $f_A$. By \thref{31analog}, $\alpha^i = q(\alpha)$. By \thref{21analog}, $i\leq p(f_\alpha,q)$. Notice that, by \thref{reducetocount}, the unique factorization of $f_A$ can be done over $\mathbb{Q}(t_1,\cdots,t_m)$. Polynomial division in $\mathbb{Q}(t_1,\cdots,t_m)[x]$ is exactly computable and the monic polynomials over $\mathbb{Q}(t_1,\cdots,t_m)$ of degree $\deg f_A$ or less are enumerable, so we can determine the minimal polynomials of roots of $f_A$. Enumerate the minimal polynomials as $f_{\alpha_1},\cdots,f_{\alpha_k}$. Then, let $r = \max\{ p(f_{\alpha_1},q),\cdots,p(f_{\alpha_k},q)\}$, and for each $0\leq i\leq r$, determine if $A^i = q(A)$. If such an $i$ exists, accept; otherwise, reject.

The above implies the following:
\begin{theorem}\thlabel{touseLk}
    Let $X$ be any countable set of indeterminants, and let $A,B\in M_{n\times n}(\mathbb{Q}(X))$. The problem of whether there exists $i\in\mathbb{N}$ such that $A^i = B$ is decidable.
\end{theorem}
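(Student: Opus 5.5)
The plan is to reduce the question $A^i = B$ to the question $A^i = q(A)$ for a suitable polynomial $q$, and then to run the case analysis carried out just above the statement. This follows \cite[Section 1]{KL} with $\mathbb{Q}$ replaced by $\mathbb{Q}(X)$. First I would note that only finitely many indeterminants $t_1,\cdots,t_m$ occur in the entries of $A$ and $B$, and these can be listed effectively. So all computations take place in $K=\mathbb{Q}(t_1,\cdots,t_m)$, where arithmetic and equality testing are exact. By \thref{reducetocount}, irreducibility and factorization questions can be settled over $K$ rather than over $\mathbb{Q}(X)$.

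Next I would compute the minimal polynomial $f_A\in K[x]$ by linear algebra over $K$: find the first power $A^d$ that is a $K$-linear combination of $I,A,\cdots,A^{d-1}$. I would then decide whether $B$ lies in the $K$-span of $I,A,\cdots,A^{d-1}$ by solving a linear system over $K$. If it does not, then no power of $A$ equals $B$, since every power $A^i$ is a polynomial in $A$; the algorithm rejects. If it does, we obtain $q(x)\in K[x]$ with $B=q(A)$, and the question becomes whether $A^i=q(A)$ for some $i\in\mathbb{N}$.

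For this remaining question I would split on whether all roots of $f_A$ are roots of unity, which is decidable by the roots-of-unity algorithm of \cite{KL} together with \cite[Lemma 3.2]{KL}.

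\textbf{Case 1: some root $\alpha$ is not a root of unity.} By \thref{31analog}, $A^i=q(A)$ means $x^i\equiv q(x)\mod f_A$. This congruence descends to $x^i\equiv q(x)\mod f_\alpha$, that is, $\alpha^i=q(\alpha)$. \thref{21analog} (or \thref{big21analog}) then bounds $i\le p(f_\alpha,q)$ by a computable quantity. I would factor $f_A$ over $K$ into irreducibles $f_{\alpha_1},\cdots,f_{\alpha_k}$, set $r$ to be the maximum of the bounds over those factors whose roots are not roots of unity, and test $A^i=q(A)$ directly for each $0\le i\le r$.

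\textbf{Case 2: all roots are roots of unity.} Then $f_A$ is monic with rational integer coefficients bounded as in \thref{rootsofunitylemma}, so every root is an $N$-th root of unity for an explicitly bounded $N$. The sequence $x^i \bmod f_A$ is then eventually periodic, with preperiod at most the largest multiplicity of a root and period dividing $N$; this is the same mechanism as in the algorithms of cases 2 and 3 of \cite[Section 3]{KL}. So it suffices to test finitely many $i$.

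I expect the main obstacle to be the effectiveness of factoring $f_A$ over $K=\mathbb{Q}(t_1,\cdots,t_m)$. Either I would cite multivariate polynomial factorization over $\mathbb{Q}$ via Gauss's lemma, or I would use the brute-force enumeration of monic divisors indicated before the statement. That enumeration needs care, because $K$ is infinite, so one must bound the degrees of the numerators and denominators of candidate divisors before the search becomes finite. The other delicate point is confirming that \thref{21analog} genuinely yields a bound depending only on $f_\alpha$ and $q$ in each of its cases.
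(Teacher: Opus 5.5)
\textbf{There is a genuine gap in your Case 2 (all roots of $f_A$ are roots of unity).} Your reduction to $A^i=q(A)$, your use of \thref{21analog} when some root is not a root of unity, and the final bounded search all follow the paper's argument. Citing multivariate factorization over $\mathbb{Q}$ via Gauss's lemma is also a sounder way to factor over $K$ than the enumeration the paper sketches.

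The problem is that $x^i \bmod f_A$ is \emph{not} eventually periodic when $f_A$ has a repeated root of unity. Take $A=\begin{bmatrix}1&1\\0&1\end{bmatrix}$. Then $f_A=(x-1)^2$, $N=1$, and $x^i\equiv ix-(i-1)\pmod{(x-1)^2}$, so the powers $A^i$ are pairwise distinct. With $B=A^5$, your search (preperiod at most $2$, period $1$) wrongly rejects. In general $x^{i+N}\equiv x^i\pmod{f_A}$ means $f_A\mid x^i(x^N-1)$. Since $x^N-1$ is squarefree, this can only happen if no nonzero root of $f_A$ is repeated.

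Periodicity is exactly the mechanism of case 2 of \cite[Section 3]{KL}: the roots of unity are distinct, so $f_A\mid x^N-1$ and $A^N=I$. The repeated-root case, which the paper covers by invoking case 3 of \cite[Section 3]{KL}, needs a different idea. If $\alpha$ is a repeated root of $f_A$, then $(x-\alpha)^2\mid x^i-q(x)$, so $\alpha^i=q(\alpha)$ and $i\alpha^{i-1}=q'(\alpha)$. This forces $i=\alpha q'(\alpha)/q(\alpha)$, a single candidate exponent that you then test. You can compute it exactly in $K(\alpha)$. Alternatively, reduce $q$ modulo $f_A\in\mathbb{Z}[x]$ and reject unless the remainder lies in $\mathbb{Q}[x]$; then $i=|q'(\alpha)|\le\sum_k k|c_k|$, where $c_k$ are the coefficients of $q$.

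\textbf{A second, smaller point: your Case 1 must use a \emph{nonzero} root.} If $\alpha=0$ and $q(0)=0$, every $i\ge 1$ satisfies $\alpha^i=q(\alpha)$, so no bound of the form in \thref{21analog} can hold there. Yet your case split sends $f_A=x^e g(x)$, with all roots of $g$ roots of unity, into Case 1. For example, $A=\operatorname{diag}(0,J)$ with $J$ the Jordan block above has only $0$ as a non-root-of-unity root, and its positive powers are pairwise distinct.

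The fix is to split off $x^e$ with $g(0)\neq 0$. Test $i<e$ directly. For $i\ge e$, the condition is $q\equiv 0\pmod{x^e}$ together with $x^i\equiv q\pmod{g}$, and the case analysis applies to $g$ using only nonzero roots. The paper's own proof has the same blind spot, so this is worth fixing in either version.
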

\begin{corollary}\thlabel{indcor}
    Let $X$ be any countable set of indeterminants. The orbit problem for $M_n(\mathbb{Q}(X))$ is decidable.
\end{corollary}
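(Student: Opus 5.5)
The plan is to reduce the orbit problem for $M_n(\mathbb{Q}(X))$ to \thref{touseLk}, following the reduction in \cite[Section 1]{KL} with $\mathbb{Q}$ replaced by $\mathbb{Q}(X)$. Suppose we are given $A\in M_{n\times n}(\mathbb{Q}(X))$ and $x,y\in\mathbb{Q}(X)^n$, and we ask whether $A^ix=y$ for some $i\in\mathbb{N}$. First, only finitely many indeterminants $t_1,\cdots,t_m$ occur in the entries of $A$, $x$ and $y$. All arithmetic can therefore be carried out exactly in $\mathbb{Q}(t_1,\cdots,t_m)$, which is a field with computable operations and equality test.

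The first step is to find the largest $k$ such that $x, Ax,\cdots, A^{k-1}x$ are linearly independent over $\mathbb{Q}(t_1,\cdots,t_m)$. We do this by Gaussian elimination, checking successive vectors until $A^kx$ lies in the span of the earlier ones; this happens for some $k\leq n$. Let $W$ be the cyclic subspace spanned by these vectors. If $y\notin W$, reject, since every $A^ix$ lies in $W$. Otherwise $y$ has a unique coordinate vector in the basis $x,\dots,A^{k-1}x$, again computed by linear algebra. Every $A^ix$ can be written as $r_i(A)x$ for a unique $r_i$ of degree $<k$, namely $r_i(z)=z^i \bmod g(z)$, where $g$ is the minimal polynomial of $A$ relative to $x$ (equivalently, the minimal polynomial of $A|_W$). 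Writing $y=q(A)x$ with $\deg q<k$, we get $A^ix=y$ if and only if $z^i\equiv q(z)\pmod{g}$. By \thref{31analog} applied to the $k\times k$ companion matrix $C$ of $g$, whose minimal polynomial is $g$, this holds if and only if $C^i=q(C)$. That is an instance of $C^i=B$ with $B=q(C)\in M_{k\times k}(\mathbb{Q}(X))$, so \thref{touseLk} decides it.

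I expect the main obstacle to be the correctness of the equivalence $A^ix=y \iff z^i\equiv q(z)\pmod g$. It needs two facts: the annihilator of $x$ under $\mathbb{Q}(X)[z]$ is exactly the ideal $(g)$, and the coordinates of a vector of $W$ in the cyclic basis are unique. Both are field-independent linear algebra, so the argument of \cite{KL} carries over verbatim. The computability claims likewise rest only on exact arithmetic in $\mathbb{Q}(t_1,\cdots,t_m)$, which is justified as in \thref{reducetocount}.
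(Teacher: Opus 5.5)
Your proposal is correct and follows the same route as the paper. The paper also transfers the cyclic-subspace reduction of \cite[Section 1]{KL} from $\mathbb{Q}$ to $\mathbb{Q}(X)$, which turns the orbit problem into deciding $A^i=B$ (indeed $A^i=q(A)$), and then applies \thref{touseLk}; you write that reduction out explicitly via the companion matrix of the relative minimal polynomial, where the paper only cites \cite{KL}, and the one thing worth adding is the trivial case $x=0$, where the answer is yes exactly when $y=0$.
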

\section{A Specific Instance of the Orbit Problem for Groups}\label{powerorbitproblem}
The orbit problem can be generalized to arbitrary sets in the following way:
\begin{definition}
    \cite[Definition 1.1]{enric}: Let $X$ be a set and let $A\subseteq {^X}\!X$. We say that $A$ is \textbf{orbit decidable} if there is an algorithm which, given $x, y\in X$, decides whether $\alpha(x) = y$ for some $\alpha\in A$. The search version of the problem asks the algorithm to provide such an $\alpha$.
\end{definition}
This definition is very broad. This, together with the reduction of the problem
\begin{center}
    Given $A\in GL_n(\mathbb{Q})$ and $x,y\in \mathbb{Q}^n$, does there exist an $i\in\mathbb{N}$ such that $A^ix = y$
\end{center}
to the problem:
\begin{center}
    Given $A, B\in GL_n(\mathbb{Q})$, does there exist an $i\in\mathbb{N}$ such that $A^i = B$?
\end{center}
 given in \cite[Section 1]{KL} motivates the following somewhat restricted version of the orbit problem:
\begin{definition}
    Let $G$ be a group. We call the question of whether, given $x, y\in G$, there exists a $n\in\mathbb{Z}$ such that $x^n = y$ the \textbf{power orbit problem} in $G$. We call the question of whether, given $x, y\in G$, there exists a $n\in\mathbb{N}$ such that $x^n = y$ the \textbf{nonnegative power orbit problem}.
\end{definition}
\begin{corollary}\thlabel{indcorrep}
    Let $X$ be a countable set of indeterminants. Suppose $G$ is a group with an exactly computable faithful representation $\varphi:G\to GL_n(\mathbb{Q}(X))$. Then, the (nonnegative) power orbit problem in $G$ is decidable.
\end{corollary}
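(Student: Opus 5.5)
The plan is to transport the problem through the faithful representation $\varphi$ and then invoke \thref{touseLk}. Given $x,y\in G$, we compute the matrices $A=\varphi(x)$ and $B=\varphi(y)$ in $GL_n(\mathbb{Q}(X))$; this is possible because $\varphi$ is exactly computable. Since $\varphi$ is a homomorphism, $\varphi(x^k)=A^k$ for every $k\in\mathbb{Z}$. Since $\varphi$ is injective, $x^k=y$ holds if and only if $A^k=B$. So the (nonnegative) power orbit problem for $(x,y)$ in $G$ is equivalent to the matrix power problem for $(A,B)$.

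For the nonnegative version, this is immediately an instance of \thref{touseLk}: decide whether some $i\in\mathbb{N}$ has $A^i=B$. For the full power orbit problem over $\mathbb{Z}$, we split into $k\ge 0$ and $k<0$. The case $k\ge 0$ is handled as above. For $k<0$, write $k=-i$ with $i\in\mathbb{N}$. Then $A^{-i}=B$ is equivalent to $(A^{-1})^i=B$, and also to $A^i=B^{-1}$. The inverse $A^{-1}=\varphi(x^{-1})$ can be computed either from the representation applied to $x^{-1}$, or directly by Gaussian elimination over $\mathbb{Q}(X)$. The latter works because arithmetic in $\mathbb{Q}(t_1,\cdots,t_m)$ is exact and only finitely many indeterminants appear, in the spirit of \thref{reducetocount}. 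We then run the decision procedure of \thref{touseLk} on $(A^{-1},B)$. We answer yes if and only if at least one of the two runs answers yes.

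The main point needing care is computability at the interface: producing $\varphi(x)$, $\varphi(y)$ and $A^{-1}$ as explicit matrices over a finitely generated subfield $\mathbb{Q}(t_1,\cdots,t_m)$, and checking equality of entries. This is exactly what the hypothesis that $\varphi$ is exactly computable supplies, together with the fact that equality in $\mathbb{Q}(X)$ reduces to polynomial identity testing in $\mathbb{Z}[t_1,\cdots,t_m]$. Faithfulness is used only for the direction ``$A^k=B$ implies $x^k=y$''. The forward direction holds for any homomorphism.
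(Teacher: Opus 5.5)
Your proposal is correct and takes essentially the same approach as the paper: compute $\varphi(x)$, $\varphi(y)$ and $\varphi(x)^{-1}$, run the decision procedure of \thref{touseLk} on $(\varphi(x),\varphi(y))$ and on $(\varphi(x)^{-1},\varphi(y))$, and use the homomorphism property for one direction and faithfulness for the other. Your rule of accepting exactly when at least one of the two runs accepts is a cleaner statement of the paper's final step.
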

\begin{proof}
    Let $M$ be the Turing machine that decides whether there exists $i\in\mathbb{N}$ with $b^i = c$ for any $b,c\in GL_n(\mathbb{Q}(X))$. Let $\varphi:G\to GL_n(\mathbb{Q}(X))$ be the exactly computable faithful representation of $G$. We describe a Turing machine $T$ as follows: Let $x, y\in G$. Compute $\varphi(x)$ and $\varphi(y)$. Compute $
    \varphi(x)^{-1}$. Run $M$ on $(\varphi(x),\varphi(y)
    )$ and on $( \varphi(x)^{-1},\varphi(y))$. If $M$ accepts, accept; if it rejects, reject.

    Suppose $T$ accepts $( x,y)$. Then, there exists an $m\in\mathbb{Z}$ such that $\varphi(x)^m = \varphi(y)$. Because $\varphi$ is an injection, this implies $x^m = y$. Now suppose there exists an $m\in\mathbb{Z}$ such that $x^m = y$. Then, $\varphi(x)^m=\varphi(x^m) =\varphi(y)$, so $T$ accepts $( x,y)$.

    Notice that the above algorithm can be modified to decide the nonnegative power orbit problem.
\end{proof}
\begin{corollary}
    The (nonnegative) power orbit problem for $B_n$ is decidable for all $n\in\mathbb{N}^+$.
\end{corollary}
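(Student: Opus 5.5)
The plan is to apply \thref{indcorrep} directly, using the Lawrence--Krammer representation as the faithful representation. Bigelow and Krammer proved that for every $n\geq 1$ the Lawrence--Krammer representation $\mathcal{L}:B_n\to GL_{\binom{n}{2}}(\mathbb{Z}[q^{\pm1},t^{\pm1}])$ is faithful. Since $\mathbb{Z}[q^{\pm1},t^{\pm1}]\subseteq\mathbb{Q}(q,t)$ and $q,t$ are algebraically independent indeterminants, we can take $X=\{q,t\}$ (or $\{q,t\}$ together with any countable set of further indeterminants). Then $\mathcal{L}$ is a faithful representation $B_n\to GL_{\binom{n}{2}}(\mathbb{Q}(X))$.

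\textbf{Step 1: exact computability.} We must check that $\mathcal{L}$ is exactly computable. An element of $B_n$ is given as a word in the Artin generators $\sigma_1^{\pm1},\ldots,\sigma_{n-1}^{\pm1}$. Krammer gives explicit formulas for the matrices $\mathcal{L}(\sigma_k)$, and their entries are Laurent polynomials in $q,t$ with integer coefficients. The inverses $\mathcal{L}(\sigma_k)^{-1}$ are also explicit, or can be computed by exact Gaussian elimination over $\mathbb{Q}(q,t)$. The image of a word is therefore a finite product of such matrices, computed by exact arithmetic in $\mathbb{Q}(q,t)$.

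\textbf{Step 2: small cases.} The cases $n=1$ and $n=2$ need separate treatment, because $\binom{n}{2}\leq 1$ is degenerate there. The representation is trivial for $n=1$ and may fail to be faithful for $n=2$ depending on conventions. For these we argue directly. $B_1$ is trivial, so the problem is trivial. $B_2\cong\mathbb{Z}$ has the faithful representation $\sigma_1\mapsto(t)\in GL_1(\mathbb{Q}(t))$, which is clearly exactly computable. Alternatively, one can solve $a^m=b$ in $\mathbb{Z}$ by hand.

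\textbf{Step 3: conclude.} With faithfulness and computability in hand, \thref{indcorrep} gives decidability of both the power orbit problem and the nonnegative power orbit problem for $B_n$.

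The main obstacle is not a computation: it is making sure the hypotheses of \thref{indcorrep} are genuinely met. This means citing faithfulness (Bigelow; Krammer) correctly for all $n\geq 3$, handling the degenerate small-$n$ cases, and confirming that the inversion step in the Turing machine of \thref{indcorrep} stays within exact arithmetic over $\mathbb{Q}(X)$.
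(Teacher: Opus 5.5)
Your proposal is correct and follows the same route as the paper: both use the Lawrence--Krammer representation $B_n\to GL_{\binom{n}{2}}(\mathbb{Z}[q^{\pm1},t^{\pm1}])\subseteq GL_{\binom{n}{2}}(\mathbb{Q}(q,t))$, cite its faithfulness and exact computability from Krammer's work, and then apply \thref{indcorrep}. Your separate handling of $n=1,2$ is harmless but not needed, since faithfulness holds for all $n$; for $n=2$, $\sigma_1$ acts on the single basis vector by $\pm tq^2$ (the sign depends on convention), which has infinite order.
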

\begin{proof}
    Fix $n\in\mathbb{N}^+$. We will work with Artin's presentation of $B_n$. Let $w,v\in B_n$. To prove the claim, we must show that the problem of whether there exists a $k\in\mathbb{N}$ such that $w^k = v$ is decidable. Let $\varphi:B_n\to GL_{\binom{n}{2}}(\mathbb{Z}[q^{\pm1},t^{\pm1}])$ be the Lawrence-Krammer representation. By the proof of \cite[Theorem 1.1]{LK} $\varphi$ is faithful, and is computable by \cite[Theorem 4.1]{LK}. By \thref{indcorrep}, the power orbit problem for $B_n$ is decidable.
\end{proof}
\begin{corollary}
    The (nonnegative) power orbit problem is decidable in $\operatorname{Aut}(F_2)$.
\end{corollary}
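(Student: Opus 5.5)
The plan is to apply \thref{indcorrep}, so the whole task is to produce an exactly computable faithful representation $\varphi:\operatorname{Aut}(F_2)\to GL_N(\mathbb{Q}(X))$ for some finite $N$ and countable set of indeterminants $X$. The natural source is the braid group representation already used in the previous corollary. The classical facts I will use are the following. $\operatorname{Inn}(F_2)\cong F_2$ has index finite quotient $\operatorname{Out}(F_2)\cong GL_2(\mathbb{Z})$. Moreover, $\operatorname{Aut}^+(F_2)$, the preimage of $SL_2(\mathbb{Z})$, is isomorphic to $B_4/Z(B_4)$ (Dyer--Formanek--Grossman). Finally, $\operatorname{Aut}(F_2)$ embeds into $\operatorname{Aut}(F_3)$, or equivalently it is linear by Krammer's argument. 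Concretely, I will follow Krammer's proof that $\operatorname{Aut}(F_2)$ is linear. That proof realizes $\operatorname{Aut}^+(F_2)$ faithfully through the Lawrence--Krammer representation of $B_4$ modulo its center, and then passes to the index-two overgroup.

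\textbf{Step 1.} Obtain a faithful computable representation of $B_4/Z(B_4)$. The Lawrence--Krammer representation $\varphi_4:B_4\to GL_6(\mathbb{Z}[q^{\pm1},t^{\pm1}])$ is faithful and computable, as cited before. The center is generated by $\Delta^2$, and $\varphi_4(\Delta^2)$ is a scalar matrix. So I compose with the adjoint action on $M_6$, namely $g\mapsto (M\mapsto \varphi_4(g)M\varphi_4(g)^{-1})$, which is a $36$-dimensional representation over $\mathbb{Q}(q,t)$. Its kernel is exactly the set of $g$ with $\varphi_4(g)$ scalar. By faithfulness of $\varphi_4$, together with the fact that the scalar-valued elements form a subgroup whose image is central in $\varphi_4(B_4)$, this kernel is $Z(B_4)$. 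The resulting representation of $B_4/Z(B_4)\cong\operatorname{Aut}^+(F_2)$ is computable, provided the isomorphism is given explicitly on generators. The Dyer--Formanek--Grossman isomorphism is explicit: the $\sigma_i$ map to specific automorphisms.

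\textbf{Step 2.} Induce up to the index-two overgroup. For $H=\operatorname{Aut}^+(F_2)\le G=\operatorname{Aut}(F_2)$ of index $2$, the induced representation $\operatorname{Ind}_H^G\rho$ is faithful whenever $\rho$ is faithful. Its kernel is $\bigcap_g g\ker\rho\, g^{-1}$, intersected inside $H$. It has dimension $72$ over $\mathbb{Q}(q,t)$. It is computable, because given a word in generators of $G$ we can track the coset representative and the $H$-part explicitly. Here $G$ is generated by $H$ together with the single automorphism $x\mapsto x^{-1},\ y\mapsto y$. Taking $X=\{q,t\}$, \thref{indcorrep} then gives decidability of both the power orbit problem and its nonnegative version.

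The main obstacle is Step 1's identification of $\operatorname{Aut}^+(F_2)$ with $B_4/Z(B_4)$. Along with it comes making the whole pipeline, from an input automorphism to a matrix, exactly computable. In particular, an element of $\operatorname{Aut}(F_2)$ given as images of $x,y$ must be rewritten as a word in generators. I would handle this by Nielsen reduction, which is algorithmic. I would take the isomorphism as a cited result rather than reprove it.
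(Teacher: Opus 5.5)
Your argument is correct and ends the same way as the paper's, by feeding a computable faithful representation into \thref{indcorrep}. The difference is where that representation comes from.

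The paper constructs nothing. It quotes the explicit $12$-dimensional representation $\psi$ of \cite[Theorem 2]{vgb} on the generators $\alpha_1,\alpha_2,\alpha_3,\omega$, cites its faithfulness, and asserts computability. You instead assemble a representation from four ingredients, each of which you handle soundly: Krammer's faithfulness theorem for $B_4$; the Dyer--Formanek--Grossman isomorphism $B_4/Z(B_4)\cong\operatorname{Aut}^+(F_2)$; the adjoint action, whose kernel you correctly identify as $Z(B_4)$ because $\varphi_4(\Delta^2)$ is scalar; and induction to the index-two overgroup, whose kernel is the normal core of $\ker\rho$ and hence trivial.

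The cited $\psi$ appears to be a leaner version of the same two steps. The six-dimensional $\rho_i$ are divided by a twelfth root of $t_1^2t_2^8$, presumably the scalar by which $\Delta^2$ (a positive word of length $12$) acts. This kills the center while staying in dimension $6$. The $2\times 2$ block form, with $\omega$ swapping the blocks, is the doubling for the index-two extension.

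What each route buys:
\begin{itemize}
\item The paper's route gives dimension $12$ rather than your $72$. The price is adjoining twelfth roots, so invoking \thref{indcorrep} tacitly uses that $\mathbb{Q}(\sqrt[12]{t_1},\sqrt[12]{t_2})=\mathbb{Q}(s_1,s_2)$, with $s_i=\sqrt[12]{t_i}$, is again purely transcendental.
\item Your adjoint trick stays inside $\mathbb{Q}(q,t)$ and derives faithfulness transparently from standard facts. You also explain how an input automorphism is turned into a word in the generators, which the paper passes over.
\end{itemize}

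Two asides in your opening are inaccurate and should be deleted, though nothing depends on them. First, $\operatorname{Inn}(F_2)$ does not have finite index in $\operatorname{Aut}(F_2)$, since the quotient $GL_2(\mathbb{Z})$ is infinite. Second, embedding into $\operatorname{Aut}(F_3)$ is not equivalent to linearity, since $\operatorname{Aut}(F_3)$ is not linear (Formanek--Procesi).
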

\begin{proof}

    By \cite[Lemma 4]{vgb}, $\operatorname{Aut}(F_2)$ is generated by the following automorphisms, where $F_2$ is the free group on symbols $x$ and $y$:
\begin{align*}
    \alpha_1:x\mapsto x, y\mapsto yx^{-1}\\
    \alpha_2:x\mapsto y,y\mapsto yx^{-1}y\\
    \alpha_3:x\mapsto x,y\mapsto x^{-1}y\\
    \omega:x\mapsto x^{-1},y\mapsto y.
\end{align*}
    Define $\psi:\operatorname{Aut}(F_2)\to \operatorname{GL}_{12}(\mathbb{Q}(\sqrt[12]{t_1},\sqrt[12]{t_2}))$ as in \cite[Theorem 2]{vgb}, where $\rho_i\in\operatorname{GL}_6(t_1,t_2)$ are as in \cite[Theorem 2]{vgb} for $i\in[3]$:
    \begin{align*}
        \psi(\alpha_i) = \begin{bmatrix}
            \frac{\rho_i}{\sqrt[12]{t_1^2t_2^8}}&0\\
            0&\sqrt[12]{t_1^2t_2^8}\rho_i^{-1}
            \end{bmatrix}\\
            \psi(\omega) = \begin{bmatrix}
                0&\frac{\rho_3\rho_1}{\sqrt[6]{t_1^2t_2^8}}\\
                \sqrt[6]{t_1^2t_2^8}\rho_1^{-1}\rho_3^{-1}&0
            \end{bmatrix}.
    \end{align*}
    Notice that $\psi$ is exactly computable. By \cite[Theorem 2]{vgb}, $\psi$ is faithful, so by \thref{indcorrep}, the power orbit problem for $\operatorname{Aut}(F_2)$ is decidable.
\end{proof}


\begin{corollary}
    The (nonnegative) power orbit problem is decidable in the free product $\langle v,w;v^3=w^2=1\rangle$.
\end{corollary}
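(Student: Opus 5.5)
The group $\langle v,w;v^3=w^2=1\rangle\cong \mathbb{Z}/3*\mathbb{Z}/2$ is isomorphic to $PSL_2(\mathbb{Z})$. The plan is to exhibit an exactly computable faithful representation into $GL_n(\mathbb{Q}(X))$ and then invoke \thref{indcorrep}, exactly as in the previous corollaries.

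\textbf{Choice of representation.} The most direct route is to embed the group into a braid group and compose with the Lawrence--Krammer representation. Recall that $B_3/Z(B_3)\cong PSL_2(\mathbb{Z})$. Alternatively, $B_3\cong\langle a,b; a^3=b^2\rangle$, so the free product is the quotient of $B_3$ by its center. That quotient is not a subgroup of $B_3$, however, so instead I would use a subgroup embedding. The cleanest one is $PSL_2(\mathbb{Z})\hookrightarrow \operatorname{Aut}(F_2)$, or even more simply $PSL_2(\mathbb{Z})\hookrightarrow PGL_2(\mathbb{Q})$. The latter is a linear group over $\mathbb{Q}$ via the adjoint (conjugation) action on $2\times 2$ matrices, which gives a faithful representation $PSL_2(\mathbb{Z})\to GL_4(\mathbb{Q})$ (indeed into $GL_3(\mathbb{Q})$ on trace-zero matrices).

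\textbf{Construction.} Concretely, set
\[
v\mapsto V=\begin{bmatrix}0&-1\\1&1\end{bmatrix},\qquad w\mapsto W=\begin{bmatrix}0&-1\\1&0\end{bmatrix}.
\]
These satisfy $V^3=-I$ and $W^2=-I$, and by the classical ping-pong argument (or the standard fact that $SL_2(\mathbb{Z})/\{\pm I\}$ has exactly this presentation) they induce an isomorphism onto $PSL_2(\mathbb{Z})$. Then define $\varphi(g)=\operatorname{Ad}(\tilde g)$, the map $M\mapsto \tilde g M\tilde g^{-1}$ on $M_{2\times2}(\mathbb{Q})\cong\mathbb{Q}^4$, where $\tilde g$ is any lift of $g$. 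This map is well defined on the quotient by $\pm I$ because $\operatorname{Ad}(-\tilde g)=\operatorname{Ad}(\tilde g)$. It is faithful on $PSL_2(\mathbb{Z})$ because the kernel of $\operatorname{Ad}$ on $GL_2$ consists of the scalars, and the only scalars in $SL_2(\mathbb{Z})$ are $\pm I$. Exact computability is clear: given a word in $v,w$, multiply the corresponding $4\times4$ rational matrices. Since $\mathbb{Q}\subseteq\mathbb{Q}(X)$ for any countable $X$ (even $X=\emptyset$), \thref{indcorrep} applies and gives decidability of both versions of the problem.

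\textbf{Main obstacle.} The key step is justifying that $v\mapsto \pm V$, $w\mapsto\pm W$ is an isomorphism of the free product onto $PSL_2(\mathbb{Z})$. I would cite this as standard (Serre, \emph{Trees}). If a self-contained argument is needed, I would use ping-pong on $\mathbb{R}\setminus\mathbb{Q}$ acting by M\"obius transformations, with the sets of positive and negative irrationals: $W$ maps positives into negatives, and $V^{\pm1}$ map negatives into positives. This shows that no nontrivial alternating word acts trivially.
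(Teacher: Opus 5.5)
Your proof is correct, but it takes a different route from the paper. The paper never mentions $PSL_2(\mathbb{Z})$. It asserts faithfulness of the Burau representation of $B_4$ and combines it with Dyer's Proposition 5 to produce a faithful representation $\langle v,w;v^3=w^2=1\rangle\to GL_3(\mathbb{Q}(u))$, with $u$ an indeterminate. Only then does it invoke \thref{indcorrep}, so the corollary is presented as a genuine use of the $\mathbb{Q}(X)$ theorem.

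You instead use the classical identification $\mathbb{Z}/3*\mathbb{Z}/2\cong PSL_2(\mathbb{Z})$ and the adjoint action. Its kernel on $GL_2(\mathbb{Q})$ is exactly the scalars, so you get a faithful representation into $GL_4(\mathbb{Q})$ (or $GL_3(\mathbb{Q})$ on trace-zero matrices). Your route buys a good deal:
\begin{itemize}
\item every ingredient is standard and elementary;
\item the target field is $\mathbb{Q}$, so Kannan--Lipton's original theorem (together with inverting $\varphi(x)$ for negative exponents) already suffices, and the paper's main theorem is not needed here;
\item you do not depend on faithfulness of the $4$-strand Burau representation, which as far as I know is a long-standing open problem, so your argument is on firmer ground.
\end{itemize}
What the paper's route offers in return is mainly thematic: an example whose representation really involves a transcendental.

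There is one small slip in your optional self-contained ping-pong argument. With the standard M\"obius action your $V$ is $z\mapsto -1/(z+1)$, and it does not send negative irrationals to positive ones (e.g.\ $V(-1/\sqrt{2})<0$). The correct inclusions go the other way. Both $V(z)=-1/(z+1)$ and $V^{-1}(z)=-1-1/z$ map positive irrationals into negative ones, while $W$ maps negatives into positives. So ping-pong goes through with the roles of the two sets exchanged, after conjugating a nontrivial reduced word so that it begins and ends with a $V^{\pm1}$ syllable.
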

\begin{proof}
    Recall that the Burau representation of $B_4$ is faithful by \cite{burau}. By \cite[Proposition 5]{dyer} and the faithfulness of the Burau representation of $B_4$, there exists an exactly computable faithful representation $\varphi:\langle v,w;v^3=w^2=1\rangle\to GL_3(\mathbb{Q}(u))$, where $u$ is an indeterminant. The result follows from \thref{indcorrep}. 
\end{proof}
\section{Outlook}\label{outlook}
There are several avenues for future work. For example, one could attempt to generalize \thref{touseLk} to $K(X)$, where $K$ is some arbitrary algebraic number field. One could also attempt to generalize \thref{touseLk} to $K(X)$ where $K$ has positive characteristic. Another project would be to determine if there exists a polynomial-time algorithm to decide the orbit problem over $\mathbb{Q}(X)$. Finally, one could classify the groups for which the power orbit problem is decidable.
\section*{Acknowledgments}
The second author would like to thank the 2023-24 Rose Research Fellows program for its guidance.

\end{document}